\newcommand{\p}{\partial}
\newcommand{\dd}{{\rm d}}
\title{The equality of mixed partial derivatives under weak differentiability conditions}
 \newtheorem{theorem}{Theorem}
 \newtheorem{lemma}[theorem]{Lemma}
 \theoremstyle{definition}
\newtheorem{definition}[theorem]{Definition}
 \theoremstyle{remark}
\newtheorem{remark}[theorem]{Remark}
\begin{document}

\maketitle

\begin{abstract}
We review and develop two little known results on the equality of mixed partial derivatives which can be considered the best results so far available in their respective domains. The former, due to Mikusi\'nski and his school, deals with equality at a given point, while the latter, due to Tolstov, concerns equality almost everywhere.
Applications to differential geometry and General Relativity are commented.
\end{abstract}

\section{Introduction}

As it is well known under reasonable conditions the mixed partial
derivatives of a real function coincide. This result  has a long
history and several distinguished scholars provided proofs including
Euler and Clairaut. However, according to Lindel\"of none of those
proofs was free of errors or tacit assumptions, so that historians
give credit for the first correct proof  to H. A. Schwarz, see
\cite{higgins40} for a  nice historical account.

% so that $\bar{O}=[a,b]\times [c,d]$.

Actually, the first correct proof of the equality of mixed partial
derivatives was obtained by Cauchy who improved and amended a
previous proof by Lagrange. However, they assumed the existence and
continuity of the derivatives $\p_1^2 f$, $\p_2^2 f$. Schwarz
removed this assumption and showed also that the continuity of
$\p_1\p_2 f$ could be obtained from the other hypothesis. Let
$O=(a,b)\times (c,d)\subset \mathbb{R}^2$. He proved
\cite{schwarz73}:
\begin{itemize}
\item[1.]  Let $f \in C^1(O,\mathbb{R})$ and suppose that $\p_2 \p_1 f$
exists and belongs to $C(O,\mathbb{R})$. Then $\p_1 \p_2 f$ exists
and $\p_1 \p_2 f=\p_2 \p_1 f$.
\end{itemize}

It is natural to ask whether the assumptions can be weakened.
Stronger versions can be found in the first published studies of
this problem. For instance, Dini in his ``Lezioni di Analisi
Infinitesimale'' \cite[p.\ 164]{dini07} does not assume $f\in
C^1(O,\mathbb{R})$, but demands just the existence of the partial
derivatives and the continuity of $\p_2 f$ in $y$.\footnote{In
Dini's book ${\p^2 f}/{\p x\p y}$ means $\p_2\p_1 f$.
%The same
%inverted notation is used in some modern textbooks, for instance in
%\cite{kaplan91} $\p_{21}$ means $\p_1 \p_2$.
We stress that in the
terminology of this article a necessary condition for a limit, such
as a partial derivative, to exist will be its finiteness. That is,
we do not tacitly use the extended real line, as some other authors
do. This convention allows us to write just ``exists'' in place of
``exists and is finite''.}

The strongest result in this direction seems to have been obtained
by Peano who removed the assumption on the continuity of $\p_2 f$
from Dini's version
 \cite{peano90}. Peano's version can be found in Rudin
 \cite{rudin76}.
\begin{itemize}
\item[2.]  Let $f\colon O\to \mathbb{R}$. Suppose that $\p_1 f$, $\p_2 f$ and  $\p_2\p_1 f$
exist on  $O$ and that the latter is continuous at $(x_0,y_0)$. Then
$\p_1\p_2 f(x_0,y_0)=\p_2\p_1 f(x_0,y_0)$.
\end{itemize}
The continuity of  $f$ is not assumed and there are indeed
discontinuous functions which admit everywhere partial derivatives
at any order \cite{kimura60}.

 Many authors tried to weaken
the conditions of Schwarz's theorem in other directions. Young
proved the following result (see Apostol \cite[Theor.\
12.12]{apostol74}):
%\begin{theorem} \label{osg}
\begin{itemize}
\item[3.] Let $f\colon O\to \mathbb{R}$. If both
partial derivatives $\p_1 f$ and $\p_2 f$ exist in neighborhood of
$(x_0,y_0)\in O$ and if both are differentiable at $(x_0,y_0)$, then
$\p_2 \p_1 f(x_0,y_0)=\p_1\p_2 f(x_0,y_0)$.
\end{itemize}
We observe that these assumptions imply that $\p_1 f$ and $\p_2 f$
being  continuous at $(x_0,y_0)$ are bounded in a neighborhood of
this point, thus $f$ is Lipschitz and hence continuous in such
neighborhood. As with Lagrange-Cauchy version, Young's result
assumes the existence of both $\p_1^2 f(x_0,y_0)$ and $\p_2^2
f(x_0,y_0)$.

%\footnote{Actually, more can be said. The function $\p_1 f(\cdot,y)$
%being a derivative is a Darboux function, and any Darboux function
%which is of bounded variation is continuous, thus $\p_1 f(\cdot,y)$
%is continuous.}

We are now going to prove a results which improves Peano's. It is
based on the concept of strong differentiation also introduced by
himself in \cite{peano92,dolecki12}. It seems that he did not
realize the usefulness of strong differentiation for the problem of
the equality of mixed derivatives, possibly because he investigated
the latter problem before the introduction of this derivative.

\begin{remark}
Recently the notion of strong differentiation has  received renewed
attention since it has been proved that the exponential map of
Lipschitz connections or sprays over $C^{2,1}$ manifolds is strongly
differentiable at the origin \cite{minguzzi13d}. This fact implies
that the exponential map is a Lipeomorphism near the origin. Thus
this notion proves important to do differential geometry under weak
differentiability conditions.
\end{remark}

\begin{definition}
A function $f\colon B \to \mathbb{R}^c$, $B\subset \mathbb{R}^a
\times \mathbb{R}^b$, $(x,z) \mapsto f(x,z)$, is said to be {\em
partially strongly differentiable} with respect to $x$ at
$(x_0,z_0)\in \bar{B}$, with differential $\p_1 f(x_0,z_0)$ if for
every $\epsilon>0$, there is a $\delta>0$ such that for every
$x_1,x_2, z$ such that $\Vert x_1-x_0\Vert <\delta$, $\Vert
x_2-x_0\Vert <\delta$, $\Vert z-z_0\Vert <\delta$, $(x_1,z)\in B$,
$(x_2,z)\in B$,
\[
\Vert f(x_2,z)-f(x_1,z)-\p_1 f(x_0,y_0) (x_2-x_1)\Vert \le \epsilon
\Vert x_2-x_1\Vert.
\]
If $f$ does not depend on $z$ then  $\p_1 f$ is called  {\em
differential} and is denoted $d f$.
\end{definition}

Clearly, if a function is strongly differentiable then it is
differentiable, and the strong differential coincides with the
differential. Some interesting properties are
\cite{peano92,esser64,nijenhuis74}:
\begin{itemize}
\item[(i)] If $f$ is strongly differentiable at $p$ then its satisfies a
Lipschitz condition in a neighborhood of $p$.
\item[(ii)] If $f$ is differentiable in a neighborhood of $p$ and the
differential is continuous at $p$ then it is strongly differentiable
at $p$. Conversely, if $f$ is strongly differentiable at $p$ and the
differential exists in a neighborhood of $p$ then  the differential
 is continuous at $p.$ A similar version for partial strong
differentiation holds (this point is an easy consequence of the mean
value theorem).
\item[(iii)] If $f$ is strongly differentiable over a subset $A\subset E$ then
the strong differential is continuous over $A$ with respect to the
induced topology.
\end{itemize}
In particular, a function is strongly differentiable in an open set
$O$ if and only if it is continuously differentiable on $O$.

The concept of strong differentiation has some advantages over that
of ordinary (Frechet) differentiation. In particular, it allows us
to obtain simpler and stronger results through shorter proofs. It
serves better the intuition and at the elementary level could
possibly
%in this author's opinion should
replace the usual differentiation in
elementary textbooks on analysis. Indeed, it extends the the range
of applicability of some key results in analysis by removing some
continuity assumptions on derivatives.
 For instance:
\begin{itemize}
\item[(iv)] A function which is partially strongly differentiable with
respect to all its variables at a point $p$ is also totally strongly
differentiable at that point  $p$ \cite{nijenhuis74}.
\item[(v)] If a function $f\colon \mathbb{R}\to \mathbb{R}$ has positive strong
derivative at a point then it is increasing in a neighborhood of
that point. More generally, if a function $f:\mathbb{R}^n\to
\mathbb{R}^n$ has invertible strong differential at a point $p$ then
it is injective in a neighborhood of $p$ and the inverse is strongly
differentiable at $f(p)$ with $d f^{-1} (f(p))=(df(p))^{-1}$
(Leach's inverse function theorem \cite{leach61})
\end{itemize}

Let us observe that the usual assumptions that make the
corresponding results hold for  ordinary differentiation imply that
the differential is continuous in a neighborhood of $p$. As observed
above these assumptions serve essentially to assume strong
differentiability in a neighborhood without naming it. The point of
using strong differentiability is that strong differentiability at a
point suffices.

Peano's theorem on the equality of mixed partial derivatives at
$(x_0,y_0)$ demands the existence of $\p_2\p_1 f$ in a neighborhood
of $(x_0,y_0)$ and its continuity there. By (ii) above, $\p_1 f$ is
partially strongly differentiable with respect to $y$ at
$(x_0,y_0)$, thus one can ask whether the previous conditions can be
replaced by partial strong differentiability. The answer is
affirmative. The author reobtained the next theorem unaware of a previous result by Mikusi\'nski \cite{mikusinski72,mikusinski73,mikusinski78} subsequently generalized to Banach spaces by Sk\'ornik \cite{skornik83}. Mikusi\'nski calls  ``full derivative" the Peano's strong derivative  and does not give references so that it was quite hard to spot his important work \cite{mikusinski78}.

He reobtains first results due to Peano and more advanced results such as Leach's inverse function theorem, but he also obtain new results on the role of strong differentiation in integration theory. In \cite[Chap.\ 12]{mikusinski78} he provides the best and most complete introduction to strong derivatives up to date. The fact that he published those results in Polish \cite{mikusinski73} and in some sections in a book devoted to quite different problems did not help to spread knowledge of his important contributions. For completeness we include the next proof as it is different from Mikuskinski's and has weaker assumptions.

%\begin{theorem}
%Let $f\colon O\to \mathbb{R}$. Suppose that the partial derivatives
%$\p_1 f$, $\p_2 f$, exist on  $O$ and that the former is partially
%strongly differentiable with respect to $y$ at $(x_0,y_0)$. Then
%$\p_2 f$ is partially strongly differentiable with respect to $x$ at
%$(x_0,y_0)$ and $\p_1\p_2 f(x_0,y_0)=\p_2\p_1 f(x_0,y_0)$.
%\end{theorem}

\begin{theorem}
Let $f\colon O\to \mathbb{R}$. Suppose that the partial derivative
$\p_1 f$ exists on  $O$ and that it is partially strongly
differentiable with respect to $y$ at $(x_0,y_0)$. Then, denoting
with $A\subset O$ the subset where $\p_2 f$ exists, provided
$(x_0,y_0)\in \bar{A}$,  $\p_2 f(:=\p_2 f\vert_A)$ is partially
strongly differentiable with respect to $x$ at $(x_0,y_0)$ and
$\p_1\p_2 f(x_0,y_0)=\p_2\p_1 f(x_0,y_0)$.
\end{theorem}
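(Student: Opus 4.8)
The plan is to funnel everything through a single uniform estimate on the second difference
\[
\Delta(x_1,x_2;y_1,y_2):=f(x_2,y_2)-f(x_1,y_2)-f(x_2,y_1)+f(x_1,y_1),
\]
and then to recover $\p_2 f$ from it by a one–sided limiting procedure. Write $L:=\p_2\p_1 f(x_0,y_0)$ for the partial strong differential of $\p_1 f$ with respect to $y$ at $(x_0,y_0)$, which exists by hypothesis. Since $O$ is a rectangle, for every sufficiently small $\delta>0$ all points whose coordinates differ from those of $(x_0,y_0)$ by less than $\delta$ lie in $O$; we use only such points.

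Fix $\epsilon>0$ and let $\delta>0$ be the radius furnished by the partial strong differentiability of $\p_1 f$ in $y$ at $(x_0,y_0)$, so that
\[
|\p_1 f(x,y_2)-\p_1 f(x,y_1)-L(y_2-y_1)|\le \epsilon\,|y_2-y_1|
\]
whenever $|x-x_0|<\delta$, $|y_1-y_0|<\delta$, $|y_2-y_0|<\delta$. For such $y_1,y_2$ and arbitrary $x_1,x_2$ with $|x_i-x_0|<\delta$, the function $x\mapsto f(x,y_2)-f(x,y_1)$ is differentiable on the whole segment joining $x_1$ to $x_2$ — this is exactly where the assumption that $\p_1 f$ exists on all of $O$ is used — so the mean value theorem produces a point $\xi$ between $x_1$ and $x_2$, hence with $|\xi-x_0|<\delta$, for which $\Delta(x_1,x_2;y_1,y_2)=\bigl[\p_1 f(\xi,y_2)-\p_1 f(\xi,y_1)\bigr](x_2-x_1)$. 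Feeding this into the displayed inequality gives the key estimate
\[
|\Delta(x_1,x_2;y_1,y_2)-L(x_2-x_1)(y_2-y_1)|\le \epsilon\,|x_2-x_1|\,|y_2-y_1|,
\]
valid for all $x_1,x_2,y_1,y_2$ in the $\delta$–neighbourhoods of $x_0$ and $y_0$, with no reference yet to $A$.

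To conclude, fix $y$ with $|y-y_0|<\delta$ and $x_1,x_2$ with $|x_i-x_0|<\delta$ and $(x_1,y),(x_2,y)\in A$. Apply the key estimate with $y_1=y$ and $y_2=y'$, where $|y'-y_0|<\delta$ and $y'\ne y$, divide through by $|y'-y|$, and let $y'\to y$: because $(x_1,y)$ and $(x_2,y)$ lie in $A$, the two difference quotients converge to $\p_2 f(x_1,y)$ and $\p_2 f(x_2,y)$, yielding
\[
|\p_2 f(x_2,y)-\p_2 f(x_1,y)-L(x_2-x_1)|\le \epsilon\,|x_2-x_1|.
\]
Since $(x_0,y_0)\in\bar A$ by hypothesis, this is precisely the assertion that $\p_2 f|_A$ is partially strongly differentiable with respect to $x$ at $(x_0,y_0)$ with differential $L$; denoting this differential by $\p_1\p_2 f(x_0,y_0)$ we get $\p_1\p_2 f(x_0,y_0)=L=\p_2\p_1 f(x_0,y_0)$. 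The only genuinely delicate points are ensuring the mean value theorem applies along the entire horizontal segment (hence the hypothesis on $\p_1 f$ over all of $O$) and justifying the passage to the limit $y'\to y$, for which the membership of $(x_1,y)$ and $(x_2,y)$ in $A$ is exactly what is required. No continuity of $\p_2 f$ or of $\p_2\p_1 f$ is invoked: the strong differentiability of $\p_1 f$ performs in one stroke the role played in Peano's theorem by the continuity of $\p_2\p_1 f$ together with a second mean value estimate.
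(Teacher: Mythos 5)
Your proof is correct and follows essentially the same route as the paper's: the uniform estimate on the second difference via the mean value theorem applied to $x\mapsto f(x,y_2)-f(x,y_1)$, followed by dividing by $|y'-y|$ and letting $y'\to y$ at points of $A$, is exactly the argument given there. The only difference is presentational — you isolate the second-difference estimate as a standalone step before invoking $A$ — which if anything makes the logic slightly cleaner.
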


We stress that while the assumptions are weaker than Peano's, the
conclusion is stronger.
% as partial strong differentiability with
%respect to $x$ is a stronger concept than partial differentiability
%with respect to $x$.
For instance, the previous theorem implies that
if $ \p_1\p_2 f(\cdot,y_0)$ exists in a neighborhood of $x_0$ then
it is continuous at $x_0$.

  Esser and Shisha \cite{esser64,nijenhuis74} construct a simple
function $h(y)$ defined on an open set of $y=0$ which is not
everywhere differentiable on any neighborhood of 0 but which is
strongly differentiable at $0$. Then $f(x,y)=x h(y)$ satisfies the
assumptions of our theorem but not those of Peano's.

\begin{proof}
Let $\epsilon >0$. Since $\p_1 f$ is partially strongly
differentiable with respect to $y$ at $(x_0,y_0)$ there is
$\delta(\epsilon)>0$ such that for every $\tilde x \in (a,b)$,
$\tilde y_1, \tilde y_2\in (c,d)$, $\vert \tilde x-x_0\vert
<\delta$, $\vert \tilde y_1-y_0\vert <\delta$, $\vert \tilde
y_2-y_0\vert <\delta$, we have
\begin{equation} \label{ied}
\vert \p_1 f(\tilde x,\tilde y_2)-\p_1 f(\tilde x,\tilde
y_1)-\p_2\p_1 f(x_0,y_0) (\tilde y_2-\tilde y_1)\vert \le \epsilon
\vert \tilde y_2-\tilde y_1\vert .
\end{equation}
Given $\epsilon>0$ let $\delta(\epsilon)>0$ be as above. Let
$x_1,x_2\in (a,b)$ be such that $\vert x_1-x_0\vert <\delta$, $\vert
x_2-x_0\vert <\delta$, and let $y\in (c,d)$ be such that $\vert
y-y_0\vert <\delta$. Furthermore, let them be such that $(x_1,y) \in
A$, $(x_2,y)\in A$.

Let $y_1,y_2\in (c,d)$, $y_1\ne y_2$, be arbitrary and such that
$\vert y_1-y_0\vert <\delta$, $\vert y_2-y_0\vert <\delta$. Let
$u(t):=f(t,y_2)-f(t,y_1)$, then by the existence of $\p_1 f$ and by
the mean value theorem there is $x\in (a,b)$, $\vert x-x_0\vert
<\delta$, such that
\[
\p_1 u(x) (x_2-x_1)=u(x_2)-u(x_1).
\]
Equation (\ref{ied}) holds for these values for $x,y_1,y_2$, thus
\begin{align*}
\vert f(x_2,y_2)-f(x_2,y_1)-f(x_1,y_2)+f(x_1,y_1)-&\,\p_2\p_1
f(x_0,y_0) (y_2-y_1) (x_2-x_1)\vert\\&\le \epsilon \vert
x_2-x_1\vert \, \vert y_2-y_1\vert.
\end{align*}
Dividing by $\vert y_2-y_1\vert$, setting $y_1=y$ and taking the
limit $y_2\to y$ we obtain
\[
\vert \p_2 f(x_2,y)-\p_2 f(x_1, y) -\p_2\p_1 f(x_0,y_0)
(x_2-x_1)\vert \le \epsilon \vert x_2-x_1\vert,
\]
which means that $\p_2 f$ has partial strong differential with
respect to $x$ at $(x_0,y_0)$ given by $\p_2\p_1 f(x_0,y_0)$, that
is $\p_1\p_2 f(x_0,y_0)=\p_2\p_1 f(x_0,y_0)$.
\end{proof}

The concept of strong differentiation lead us to a satisfactory
result which we can summarize as follows:

\begin{itemize}
\item[(vi)] If the strong derivative $\p_2\p_1 f(x_0,y_0)$ exists
and it makes sense to consider the strong derivative $\p_1\p_2
f(x_0,y_0)$ (that is, $\p_2 f$ exists in a set which accumulates at
$(x_0,y_0)$) then the latter exists and  they coincide.
\end{itemize}

%It seems that once again the concept of strong differentiation has
%brought us to a stronger and more satisfactory results than
%(ordinary) differentiation.

\section{Equality almost everywhere}

We might also ask to what extent the equality of mixed partial
derivatives holds for functions which admit those second derivatives
almost everywhere. Some important results have been obtained for
convex functions. A well known result by Alexandrov establishes that
convex functions admit a generalized Peano derivative of order 2
%(some authors cite Stolz for this concept
%\footnote{Actually Peano is also the first author who defined the
%modern version of (Frechet) differentiation and the concept of
%strong differentiation. Perhaps, most authors do not name these
%derivatives after him just to distinguish these concepts
%\cite{dolecki12}.})
in the sense that for almost every $x$
\[
f(x+h)=f(x)+L(h)+ A(h,h)+o_x(\vert h\vert^2),
\]
where $L$ is a linear map and $A$ is a quadratic form. Less clear is
whether $A$ can be obtained from the differentiation of the
generalized differential of $f$ and whether  such double
differentiation gives a symmetric Hessian. The affirmative answer to
this question has been established by Rockafellar
\cite{rockafellar99}.

The Russian mathematician G.\ P.\ Tolstov clarified several
questions related to the equality of mixed derivatives in two papers
published in 1949.
%Most of this works
%concern the construction of counterexamples.
Unfortunately, only one of those articles was translated into
English \cite{tolstov49}, so that the interesting results  contained
in the other paper \cite{tolstov49b} have been largely overlooked by
the mathematical community. Most space in those papers is devoted to
the construction of counterexamples. In fact, he proved
\cite{tolstov49b}:
\begin{itemize}
\item[4.] There exists a function $f\in C^1(O,\mathbb{R})$, the mixed second derivatives of which
exist at every point of $O$ but such that $\p_2\p_1 f\ne \p_1\p_2 f$
on a set $P\subset O$ of positive measure.
\item[5.] There exists a function $f\in C^1(O,\mathbb{R})$, the mixed second derivatives of which
exist almost everywhere in $O$ and such that $\p_2\p_1 f\ne \p_1\p_2
f$ almost everywhere in $O$.
\end{itemize}
On the positive direction he improved Young's theorem as follows
\cite{tolstov49}:
\begin{itemize}
\item[6.] If the function $f$ has all second derivatives everywhere
in $O$, then the equality of mixed derivatives holds in $O$.
\end{itemize}
This theorem with {\em existence} replaced by {\em existence almost
everywhere} in both the hypothesis and thesis had been already
proved by Currier \cite{currier33}.

These type of results have still an undesirable feature, for they
place conditions on the existence of the  double derivatives $\p^2_1
f$ and $\p^2_2 f$. Actually, Tolstov  obtained some results which do
not place conditions on the homogeneous second derivatives. In the
remainder of this work we shall review and develop them. In
particular, we shall stress the importance for applications of the
Lipschitz conditions on the first derivatives.

We start with an important Lemma from Tolstov's paper. Since there
are no published English translations, we provide the proof.

\begin{lemma}[Tolstov \cite{tolstov49b}] \label{lem}
Let $O=(a,b)\times (c,d) \subset \mathbb{R}^2$ and let
\[
f(x,y)=\int_a^x \dd u\int_c^y h(u,v) \dd v.
\]
where $h \in L^1(\bar{O},\mathbb{R})$.  Then there is a measurable
set $e_1\subset (a,b)$ with $\vert e_1\vert=b-a$ such that for every
$x\in e_1$  and for every $y\in (c,d)$
\begin{equation} \label{jud}
\p_1 f(x,y)=\int_c^y h(x,v) \dd v.
\end{equation}
\end{lemma}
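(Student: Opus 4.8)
The plan is to rewrite $f$ as an iterated integral in a form amenable to the one-dimensional Lebesgue differentiation theorem, and to take $e_1$ to be a full-measure set of \emph{$L^1(c,d)$-valued Lebesgue points} of the slice map $u\mapsto h(u,\cdot)$. Put $g(u,y):=\int_c^y h(u,v)\,\dd v$. By Tonelli the integral defining $f$ is finite, $f$ is continuous, and for a.e.\ $u\in(a,b)$ the slice $v\mapsto h(u,v)$ lies in $L^1(c,d)$, so $g(u,y)$ is defined for a.e.\ $u$ and all $y$, and $f(x,y)=\int_a^x g(u,y)\,\dd u$. Consequently the difference quotient of $f$ in its first slot is just the average of $g(\cdot,y)$ over the interval between $x$ and $x+t$.

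The heart of the matter is the following claim: there is a measurable $e_1\subset(a,b)$ with $\vert e_1\vert=b-a$ such that every $x\in e_1$ is a Lebesgue point of the map $u\mapsto h(u,\cdot)\in L^1(c,d)$, i.e.
\[
\frac{1}{\vert t\vert}\int_x^{x+t}\Vert h(u,\cdot)-h(x,\cdot)\Vert_{L^1(c,d)}\,\dd u\ \longrightarrow\ 0\qquad(t\to 0).
\]
I would prove this without invoking Bochner-space machinery: fix a countable dense set $\{w_k\}\subset L^1(c,d)$ (rational step functions will do); for each $k$ the scalar function $u\mapsto\int_c^d\vert h(u,v)-w_k(v)\vert\,\dd v$ lies in $L^1(a,b)$, so by the ordinary Lebesgue differentiation theorem a.e.\ $x$ is one of its Lebesgue points. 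Intersecting these countably many full-measure sets (together with the full-measure set of $u$ for which $h(u,\cdot)\in L^1(c,d)$) yields $e_1$, and an $\varepsilon/3$-argument — choose $w_k$ within $\varepsilon$ of $h(x,\cdot)$ — transfers the Lebesgue-point property from the $w_k$ to $h(x,\cdot)$ itself, which is exactly the displayed limit.

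Granting the claim, the conclusion is immediate: for $x\in e_1$, any $y\in(c,d)$, and small $t\neq0$ (take $t>0$, say),
\[
\Big|\,\frac{f(x+t,y)-f(x,y)}{t}-\int_c^y h(x,v)\,\dd v\,\Big|\le\frac{1}{\vert t\vert}\int_x^{x+t}\Big(\int_c^y\vert h(u,v)-h(x,v)\vert\,\dd v\Big)\dd u\le\frac{1}{\vert t\vert}\int_x^{x+t}\Vert h(u,\cdot)-h(x,\cdot)\Vert_{L^1(c,d)}\,\dd u ,
\]
and the right-hand side tends to $0$ by the claim. Hence $\p_1 f(x,y)$ exists and equals $\int_c^y h(x,v)\,\dd v$, for \emph{every} $y$ at once, which is (\ref{jud}).

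The main obstacle is precisely this uniformity in $y$. Applying the scalar Lebesgue theorem to $g(\cdot,y)$ for a fixed $y$ only produces a null exceptional set depending on $y$, and one cannot simply take the union over rational $y$ and pass to the limit: the difference quotients $y\mapsto(f(x+t,y)-f(x,y))/t$, though continuous in $y$, may converge pointwise on a dense set of $y$'s without converging for every $y$ (no equicontinuity is available a priori, and the natural maximal-function estimates again introduce a $y$-dependent null set). Upgrading to the $L^1(c,d)$-Lebesgue point is what supplies the single, $y$-independent majorant $\Vert h(u,\cdot)-h(x,\cdot)\Vert_{L^1(c,d)}$, after which everything reduces to Fubini and the ordinary Lebesgue differentiation theorem.
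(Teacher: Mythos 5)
Your proof is correct, but it takes a genuinely different route from the paper's. You locate the full-measure set $e_1$ as the set of vector-valued Lebesgue points of the slice map $u\mapsto h(u,\cdot)\in L^1(c,d)$, obtained without explicit Bochner-space language by running the Lebesgue differentiation theorem on the countably many scalar functions $u\mapsto\Vert h(u,\cdot)-w_k\Vert_{L^1(c,d)}$ for a dense family $\{w_k\}$ and finishing with an $\varepsilon/3$ step; the single $y$-independent majorant $\Vert h(u,\cdot)-h(x,\cdot)\Vert_{L^1(c,d)}$ then makes the difference quotients converge uniformly in $y$, which immediately yields Eq.\ (\ref{jud}) for every $y$ at once. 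The paper proceeds differently: it first applies the scalar Fundamental Theorem of Calculus for each fixed $y$, uses Fubini on the characteristic function of the resulting full-measure set $\Lambda\subset O$ to extract an $e_1$ such that for $x\in e_1$ the identity holds for a dense set of $y$'s, and then extends to all $y$ by splitting $h=h^+-h^-$, observing that the difference quotients of the corresponding primitives are monotone and continuous in $y$ and that monotone functions converging on a dense set to a continuous limit converge everywhere. Your argument buys a somewhat stronger conclusion (uniformity in $y$ of the difference quotients, and a proof that works verbatim when $h(u,\cdot)$ takes values in a separable Banach space) at the price of the separability machinery; the paper's argument is more elementary, resting only on monotonicity and Fubini, and stays closer to Tolstov's original. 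Both approaches correctly identify the crux of the lemma, namely making the exceptional null set in $x$ independent of $y$.
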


Clearly, by Fubini's theorem the integrals in the definition of
$f(x,y)$ can be exchanged  and hence a similar statement holds for
the derivative with respect to $y$. Fubini's theorem will play a
very important role in the proof of this lemma and in the proofs of
the next theorems. The reader is referred to Aksoy and Martelli
\cite{aksoy02} for a discussions of the relationship between
Fubini's and Schwarz's theorems.

\begin{proof}
 Differentiating $f(x,y)$ with respect to
$x$ we obtain for $x \in X_y\subset [a,b]$ with $\vert
X_y\vert=b-a$,
 (Fundamental Theorem of Calculus e.g.\ \cite[Theor.\ 8.17]{rudin70})
\begin{equation} \label{bos}
\p_1 f(x,y)=\int_c^y  h(x,v)\, \dd v .
\end{equation}

Let $\Lambda=\cup_y  ( X_y\times \{y\} )$, so that $\vert
\Lambda\vert=(b-a)(d-c)$. Equation (\ref{bos}) holds for $(x,y)\in
\Lambda$.  Let $Y_x\subset (c,d)$ be the coordinate slices defined
by $\{x\}\times Y_x=(\{x\}\times (c,d))\cap \Lambda$, or
equivalently $Y_x=\pi_2(\pi_1^{-1}(x)\cap \Lambda)$. Fubini's
theorem applied to the characteristic function of $\Lambda$ gives
that there is some $e_1 \subset (a,b)$, $\vert e_1\vert=b-a$, such
that for every $x\in e_1$, $\vert Y_x\vert= c-d$. Let $x\in e_1$,
for $y\in Y_x$ Eq.\ (\ref{bos}) is true. We wish to show that it
holds for any $y\in(c,d)$. Let $y\in (c,d)$  and let $h_n$ be any
sequence converging to zero. Let
\[
\varphi^\pm_n(y):=\frac{1}{h_n} \int_x^{x+h_n}
\dd u\int_c^y h^\pm(u,v) \dd v.
\]
where $h^+$ and $h^-$ are the positive and negative parts of $h$,
respectively.  Since the functions $\varphi^+_n(y)$ are monotone and
continuous and converge in a dense subset (for $n \to \infty$), to the continuous function
$\int_c^y h^+(x,v)\, \dd v$ they do the same everywhere on $(c,d)$,
and analogously  for $\varphi^-_n(y)$. By the arbitrariness of
$h_n$, Eq.\ (\ref{bos}) is true for every $y\in (c,d)$ provided
$x\in e_1$.
\end{proof}

\begin{remark}
Notice that the Fundamental Theorem of Calculus (e.g.\ \cite[Theor.\
8.17]{rudin70}) states that Eq.\ (\ref{jud}) is true for $x\in
e_1(y)$, where $\vert e_1(y) \vert=b-a$, but $e_1(y)$ might depend
on $y$. The previous lemma states that $e_1$ does not depend on $y$.
\end{remark}

A corollary is:

\begin{theorem}[Tolstov \cite{tolstov49b}] \label{the}
Let $h$, $f$ and $O$ be as in Lemma \ref{lem} above. There are
measurable sets $e_1$ and $e_2$ with $\vert e_1\vert=b-a$, $\vert
e_2\vert=d-c$, such that
\begin{itemize}
\item[(a1)] Anywhere in $\{x\}\times (c,d)$ with $x\in e_1$, we have
\begin{equation} \label{ni1}
\p_1 f(x,y)=\int_c^y h(x,v) \dd v.
\end{equation}
\item[(a2)] Anywhere in $(a,b)\times \{y\}$ with $y\in e_2$, we have
\begin{equation} \label{ni2}
\p_2 f(x,y)=\int_a^x h(u,y) \dd u.
\end{equation}
\item[(b)] There exist a measurable  set $E\subset e_1\times e_2$ with $\vert
E\vert=(b-a) (d-c)$, such that for every $(x,y)\in E$ the mixed
derivatives exist, and
\begin{equation} \label{jis}
\p_2\p_1 f(x,y)=h(x,y)=\p_1\p_2 f(x,y).
\end{equation}
Moreover, for every $x\in e_1$, $\vert \pi_1^{-1}(x)\cap E\vert=d-c$, and for every $y\in e_2$, $\vert \pi_2^{-1}(y)\cap E\vert =b-a$.
\end{itemize}
\end{theorem}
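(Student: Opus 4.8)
\emph{Proof proposal.}
The plan is to read off (a1) verbatim from Lemma \ref{lem}, to obtain (a2) by applying that same lemma to $f$ written with the two integrations interchanged (permissible since $h\in L^1(\bar{O})$, so that $f(x,y)=\int_c^y\dd v\int_a^x h(u,v)\,\dd u$ as well), and then to manufacture the set $E$ of (b) by differentiating, in the surviving variable, the two inner integrals appearing in \eqref{ni1} and \eqref{ni2}. The differentiation itself will be supplied by the Lebesgue differentiation theorem; the real work lies in organising the various null sets so that the uniform slice statement at the end of (b) --- ``for every $x\in e_1$'' rather than merely ``for almost every $x$'' --- can be met.

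Concretely, let $e_1\subset(a,b)$ be the set furnished by Lemma \ref{lem}; removing a further null set we may assume in addition that $h(x,\cdot)\in L^1((c,d))$ for every $x\in e_1$, which is legitimate because $\int_a^b\dd x\int_c^d|h(x,v)|\,\dd v<\infty$. Symmetrically we obtain $e_2\subset(c,d)$, $\vert e_2\vert=d-c$, with $h(\cdot,y)\in L^1((a,b))$ for every $y\in e_2$ and with (a2) holding on $(a,b)\times\{y\}$. Now for $x\in e_1$ the function $y\mapsto\p_1 f(x,y)$ coincides, by (a1), with the absolutely continuous function $y\mapsto\int_c^y h(x,v)\,\dd v$, hence $\p_2\p_1 f(x,y)=h(x,y)$ for every $y$ in a set $P_x\subset(c,d)$ of measure $d-c$; symmetrically, for $y\in e_2$ the function $x\mapsto\p_2 f(x,y)$ is absolutely continuous, so $\p_1\p_2 f(x,y)=h(x,y)$ for every $x$ in a set $Q_y\subset(a,b)$ of measure $b-a$. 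Set
\[
S=\{(x,y)\in e_1\times e_2:\ y\in P_x\},\qquad T=\{(x,y)\in e_1\times e_2:\ x\in Q_y\}.
\]

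Both $S$ and $T$ are measurable: on $e_1\times(c,d)$ the function $\p_1 f$ is measurable in $x$ and continuous in $y$, so its $y$-difference quotients may be computed along rational increments, whence the set on which $\p_2\p_1 f$ exists and equals the measurable function $h$ is measurable, and likewise for $T$. Since every vertical slice of $S$ has measure $d-c$ and every horizontal slice of $T$ has measure $b-a$, Tonelli's theorem gives $\vert S\vert=\vert T\vert=(b-a)(d-c)$; consequently $T$ has vertical slices of measure $d-c$ for a.e.\ $x\in e_1$, and $S$ has horizontal slices of measure $b-a$ for a.e.\ $y\in e_2$. Discarding these last two null sets --- that is, passing to $C_1=\{x\in e_1:\vert T_x\vert=d-c\}$ and $C_2=\{y\in e_2:\vert S^y\vert=b-a\}$, which are the sets $e_1,e_2$ asserted by the theorem (still of full measure, with (a1), (a2) surviving since we only shrink) --- put $E=(S\cap T)\cap(C_1\times C_2)$. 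On $E$ one has simultaneously $y\in P_x$ and $x\in Q_y$, so \eqref{jis} holds; for $x\in C_1$ the slice $E_x=P_x\cap T_x\cap C_2$ is the intersection of three subsets of $(c,d)$ each of full measure, hence $\vert E_x\vert=d-c$, and symmetrically $\vert E^y\vert=b-a$ for $y\in C_2$; a final application of Tonelli's theorem then yields $\vert E\vert=(b-a)(d-c)$.

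I expect the only genuine obstacle to be this last point: the Lebesgue differentiation theorem produces the slice identities only ``for almost every $x$'' (resp.\ $y$), and upgrading this to ``for every $x\in e_1$'' as demanded in (b) is precisely what forces the two-stage shrinking to $C_1$ and $C_2$. A secondary, purely technical point is the joint measurability of the sets on which the mixed partials exist and take the value $h$, dealt with by the rational-increment remark above, together with the measurability of $x\mapsto\vert T_x\vert$ and $y\mapsto\vert S^y\vert$ that makes $C_1,C_2$ measurable.
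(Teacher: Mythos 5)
Your proposal is correct and follows essentially the same route as the paper: both obtain (a1)--(a2) from Lemma \ref{lem}, get the slicewise identities $\p_2\p_1 f=h$ and $\p_1\p_2 f=h$ by differentiating the absolutely continuous inner integrals, and then use Fubini--Tonelli with a two-stage shrinking of $e_1,e_2$ (your $C_1,C_2$ are the paper's $c_1,c_2$, your $S,T$ its $E^1,E^2$) to secure the ``for every $x\in e_1$'' slice statement in (b). Your explicit remark on the measurability of $S$ and $T$ via rational increments addresses a point the paper passes over silently, but the argument is otherwise the same.
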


With respect to Tolstov's paper we have included the last statement of  point (b).
Although this inclusion lengthens the proof    we give this version in order to be as complete as possible. A similar statement could be included at the end of the next theorems.

\begin{proof}
Let $e_1$ and $e_2$ be as in Lemma \ref{lem}, then (a1) and (a2) are
a rephrasing of that lemma. Differentiating Eq.\ (\ref{ni1}) with
$x\in e_1$ with respect to $y$ we obtain that there is $e^2(x)\subset
(c,d)$, $\vert e^2(x)\vert=d-c$ such that for $y\in e^2(x)$ the derivative
$\p_2 \p_1 f(x,y)$ exists and
\begin{equation} \label{bjy}
\p_2 \p_1 f(x,y)=h(x,y).
\end{equation}
By taking the intersection of $e^2(x)$ with $e_2$, if necessary, we can assume that $e^2(x)\subset e_2$.

Let $E^1=\cup_{x\in e_1} \{x\}\times e^2(x)$, so that $\vert E^1\vert=(b-a)(d-c)$ and on $E^1$ Eq.\ (\ref{bjy}) holds true. Observe that $\pi_1(E^1)\subset e_1$ and $\pi_2(E^1)\subset e_2$. By Fubini's theorem there is $c_2 \subset e_2$, $\vert c_2\vert = d-c$, such that for every $y\in c_2$, $d_1(y):=\pi_1(\pi_2^{-1}(y)\cap E^1)\subset e_1$, is such that $\vert d_1(y)\vert=b-a$.

By taking the intersection of $e^2(x)$ with $c_2$, if necessary, we can assume that $e^2(x)\subset c_2$. This redefinition does not change the properties of $E^1$, which gets replaced as follows $E^1 \to E^1\cap (e_1\times c_2)$, but now $\pi_2(E^1)\subset c_2$, and for every $y\in c_2$, $d_1(y)=\pi_1(\pi_2^{-1}(y)\cap E^1)\subset e_1$, is such that $\vert d_1(y)\vert=b-a$.

Analogously, starting from Eq.\ (\ref{ni2}) and working with the roles of $x$ and $y$ exchanged we obtain that there is $c_1\subset e_1$, $\vert c_1\vert=b-a$, such that for every $y\in e_2$ there is $e^1(y)\subset c_1$, $\vert e^1(y)\vert = b-a$, such that on $E^2=\cup_{y\in e_2} e^1(y)\times\{y\}$  the derivative $\p_1 \p_2
f(x,y)$ exists and
\begin{equation} \label{bji}
\p_1 \p_2 f(x,y)=h(x,y).
\end{equation}
Moreover, $\pi_1(E^2)\subset c_1$ and for every $x\in c_1$,  $d_2(x):=\pi_2(\pi_1^{-1}(x)\cap E^2)\subset e_2$, is such that $\vert d_2(x)\vert=d-c$.

Let us define $E=E^1\cap E^2$, then $E\subset c_1\times c_2$, and for every $x\in c_1$,
\[
\pi_2(\pi_1^{-1}(x)\cap E)= \pi_2(\pi_1^{-1}(x)\cap E^1)\cap \pi_2(\pi_1^{-1}(x)\cap E^2)= e^2(x)\cap d_2(x),
\]
where both sets on the right-hand side have full measure thus $\vert  \pi_2(\pi_1^{-1}(x)\cap E)\vert=d-c$ (and analogously for the analogous statement with $x$ and $y$ exchanged). Finally, (\ref{bjy})-(\ref{bji}) are true on $E$, which proves (b) keeping (a1)-(a2) once we redefine $c_1\to e_1$, $c_2\to e_2$.
\end{proof}

We can also obtain a related theorem which adds information on the
differentiability properties of $f$:

\begin{theorem} \label{jui}
Let $f\colon [a,b]\times [c,d] \to \mathbb{R}$,  be such that
$f(x,\cdot):[c,d]\to \mathbb{R}$ and $f(\cdot,y): [a,b]\to
\mathbb{R}$ are absolutely continuous for every $x\in [a,b]$ and
$y\in [c,d]$, respectively. The following properties are equivalent:
\begin{itemize}
\item[(i)] There is  $e_1\subset (a,b)$, $\vert e_1\vert=b-a$, such that for $x\in e_1$, $\p_1
f(x,\cdot)$ exists  for every $y$, moreover it is absolutely
continuous over $[c,d]$, and $\p_2 \p_1 f\in L^1([a,b]\times
[c,d])$.
\item[(ii)]  There is  $e_2\subset (c,d)$, $\vert e_2\vert=d-c$, such that for $y\in e_2$, $\p_2 f(\cdot, y)$ exists for every $x$, moreover it  is absolutely continuous over
$[a,b]$, and $\p_1 \p_2 f\in L^1([a,b]\times [c,d])$.
\end{itemize}
Suppose they hold true then there is a subset $E\subset
e_1\times e_2$, $\vert E\vert=(b-a)(d-c)$, such that on $E$
the function $f$ is differentiable, $\p_2 \p_1 f(x,y)$, $\p_1 \p_2
f(x,y)$ exist, and
\[\p_2 \p_1 f=\p_1 \p_2 f.\]
\end{theorem}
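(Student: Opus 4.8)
The plan is to reduce the theorem to Theorem~\ref{the} by representing $f$, modulo an additive ``separable'' term, as the indefinite double integral of an $L^1$ function. Assuming (i), set $h:=\p_2\p_1 f\in L^1([a,b]\times[c,d])$. For $x\in e_1$, absolute continuity of $\p_1 f(x,\cdot)$ gives $\p_1 f(x,y)=\p_1 f(x,c)+\int_c^y h(x,v)\,\dd v$ for every $y$; for fixed $y$, absolute continuity of $f(\cdot,y)$ gives $f(x,y)=f(a,y)+\int_a^x\p_1 f(u,y)\,\dd u$, and since $\vert e_1\vert=b-a$ the integrand equals the previous expression for a.e.\ $u$. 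Combining these with $f(x,c)=f(a,c)+\int_a^x\p_1 f(u,c)\,\dd u$ I would obtain
\[
f(x,y)=\alpha(x)+\beta(y)+g(x,y),\qquad g(x,y):=\int_a^x\dd u\int_c^y h(u,v)\,\dd v,
\]
with $\alpha(x)=f(x,c)-f(a,c)$ and $\beta(y)=f(a,y)$ absolutely continuous, the auxiliary one--variable integrals being finite by Fubini. Property (ii) is then read off from this form: $\p_2 f=\beta'(y)+\p_2 g$ where defined, and the $y$--analogue of Lemma~\ref{lem} produces $e_2$, $\vert e_2\vert=d-c$, such that for $y\in e_2$ one has $\p_2 g(x,y)=\int_a^x h(u,y)\,\dd u$ for all $x$, absolutely continuous in $x$, and $\p_1\p_2 f=h\in L^1$. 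Interchanging the roles of $x$ and $y$ yields the full equivalence of (i) and (ii).

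Granting (i), the equality of the mixed derivatives follows by applying Theorem~\ref{the} with $g$ and $h$ in place of $f$ and $h$: it produces a set $E_0$, $\vert E_0\vert=(b-a)(d-c)$, on which $\p_2\p_1 g=h=\p_1\p_2 g$. The separable part $\sigma(x,y):=\alpha(x)+\beta(y)$ has $\p_1\sigma(x,y)=\alpha'(x)$ independent of $y$ wherever $\alpha'$ exists, so $\p_2\p_1\sigma=0$ at a.e.\ $x$ and every $y$, and symmetrically $\p_1\p_2\sigma=0$; as $f=\sigma+g$, on the intersection of these sets the mixed second derivatives of $f$ exist and coincide. It remains to show that $f$ is differentiable on a full--measure subset, and since $\sigma$ is differentiable wherever $\alpha'$ and $\beta'$ exist, i.e.\ almost everywhere, this reduces to the a.e.\ differentiability of $g$.

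For the latter I would write the increment of $g$ at $(x_0,y_0)$ as the sum of the two coordinate--axis increments and the mixed second difference $\int_{R(s,t)}h$, where $R(s,t)$ is the rectangle with a vertex at $(x_0,y_0)$ and signed side lengths $s,t$. The axis increments, minus their linear parts, are $o(s)$, resp.\ $o(t)$, for a.e.\ $(x_0,y_0)$ --- they are increments of the absolutely continuous functions $g(\cdot,y_0)$ and $g(x_0,\cdot)$, and a routine Fubini argument shows that the set of $(x_0,y_0)$ where the relevant one--dimensional differentiability holds has full measure. So everything hinges on the estimate
\[
\Big\vert\int_{R(s,t)}h\,\Big\vert=o\big(\sqrt{s^2+t^2}\,\big)\qquad\text{for a.e.\ }(x_0,y_0),
\]
which I would obtain by a Vitali covering argument. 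Since $R(s,t)$ lies inside the axis--parallel square $Q$ with a vertex at $(x_0,y_0)$, the appropriate orientation, and side $\ell=\max(\vert s\vert,\vert t\vert)\le\sqrt{s^2+t^2}$, it suffices that, for each of the four orientations, $\frac1\ell\int_{Q_\ell}\vert h\vert\to0$ as $\ell\to0$ at a.e.\ point, $Q_\ell$ being the corresponding square of side $\ell$. If this failed on a set $S$ of positive measure, then, refining $S$, for some $c>0$ every point of $S$ would be a vertex of arbitrarily small squares $Q$ with $\int_Q\vert h\vert>c\,\mathrm{side}(Q)$; these form a Vitali cover of $S$, and extracting a disjoint countable subfamily of squares of side $<\eta$ covering $S$ up to measure zero gives $\int_Q\vert h\vert>c\,\mathrm{side}(Q)>\tfrac c\eta\vert Q\vert$, hence $\Vert h\Vert_{L^1}\ge\tfrac c\eta\vert S\vert$; letting $\eta\downarrow0$ forces $\vert S\vert=0$, a contradiction. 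This proves the estimate, so $g$ and therefore $f$ is differentiable a.e. Finally I would take $E$ to be $e_1\times e_2$ intersected with $E_0$ and with all the full--measure sets constructed above.

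I expect the a.e.\ differentiability of $g$ to be the real obstacle. The crude bound $\int_R\vert h\vert\le\Vert h\Vert_{L^1}$ controls the mean of $\vert h\vert$ over $R$ only up to $\Vert h\Vert_{L^1}/\vert R\vert$, far short of the $o(1/\sqrt{\vert R\vert})$ decay joint differentiability demands; and since for a merely integrable $h$ the strong maximal function need not be finite almost everywhere, there is no pointwise shortcut: one must genuinely exploit the shrinking of the squares through a covering estimate. The rest --- the canonical form, the passage to (ii), and the invocation of Theorem~\ref{the} --- is routine bookkeeping with Fubini's theorem.
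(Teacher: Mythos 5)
Your proposal is correct, and for the equivalence of (i) and (ii) and for the a.e.\ equality of the mixed derivatives it follows essentially the paper's own route: integrate $\p_1 f(x,y)-\p_1 f(x,c)=\int_c^y \p_2\p_1 f(x,v)\,\dd v$ in $x$ to exhibit $f$, modulo the separable term $f(x,c)+f(a,y)-f(a,c)$, as the double indefinite integral of $h=\p_2\p_1 f$, and then invoke Theorem~\ref{the} (the paper applies it directly to the right-hand side rather than naming $\alpha$, $\beta$, $g$, but the content is the same; your extra bookkeeping that $\p_2\p_1\sigma=\p_1\p_2\sigma=0$ a.e.\ for the separable part is routine and correct). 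Where you genuinely diverge is the proof that $f$ is differentiable on a full-measure set. The paper expands the increment along a horizontal and then a vertical segment, writing $f(x_0+\Delta x,y_0+\Delta y)-f(x_0+\Delta x,y_0)=\p_2 f(x_0+\Delta x,y_0)\Delta y+o_2(\Delta y)$ and then expanding $\p_2 f(x_0+\Delta x,y_0)$ using the existence of $\p_1\p_2 f(x_0,y_0)$; the delicate point there is that $o_2(\Delta y)$ is a remainder taken at the moving base point $x_0+\Delta x$, and its smallness relative to $(\Delta x^2+\Delta y^2)^{1/2}$ is not spelled out. Your argument sidesteps this by isolating the second difference $\int_{R(s,t)}h$ and proving $\int_{Q_\ell}\vert h\vert=o(\ell)$ a.e.\ for squares with a vertex at the point, which yields the joint estimate directly from $h\in L^1$. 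The Vitali covering step is sound (disjointness plus $\mathrm{side}(Q_i)<\eta$ gives $\vert S\vert\le(\eta/c)\Vert h\Vert_{L^1}\to 0$), and your closing observation is apt: the $o(\ell)$ bound over squares is exactly the right substitute for the a.e.\ finiteness of the strong maximal function over all rectangles, which is false for general $L^1$ functions. You could in fact shorten that step: squares of side $\ell$ with a vertex at $p$ shrink nicely to $p$ in the sense of the Lebesgue differentiation theorem, so at every Lebesgue point of $h$ one even has $\int_{Q_\ell}\vert h\vert=O(\ell^2)$. The only loose ends, both harmless, are that $e_2$ should also be intersected with the full-measure sets of $y$ where $\beta'(y)$ exists and where $h(\cdot,y)\in L^1([a,b])$.
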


\begin{proof}
Assume (i). For $x\in e_1$ function $\p_1 f(x,\cdot)$ is absolutely
continuous thus for every $y$
\begin{equation} \label{kid}
\p_1 f(x,y)-\p_1 f(x,c)=\int_c^y \p_2 \p_1 f(x,v) \dd v.
\end{equation}
Since $f(\cdot,y)$ is absolutely continuous we obtain upon
integration
\[
f(x,y)-f(a,y)-f(x,c)+f(a,c)=\int_a^x \dd u \int_c^y \p_2 \p_1 f(u,v)
\dd v.
\]
By Theorem \ref{the} applied to the right-hand side there is a
subset $\tilde{e}_1\subset (a,b)$, $\vert \tilde{e}_1\vert=b-a$,
such that Eq.\ (\ref{kid}) holds. This is already known to be true
with $\tilde{e}_1=e_1$. The same theorem establishes the existence
of $e_2\subset(c,d)$, $\vert e_2\vert=d-c$, such that for $y\in e_2$
and  for every $x\in (a,b)$
\[
\p_2 f(x,y)-\p_2 f(a,y)=\int_a^x \p_2 \p_1 f(u,y) \dd u.
\]
This last equation shows that for $y\in e_2$ the function $\p_2
f(\cdot,y)$ is absolutely continuous, thus for every $y\in e_2$, $\vert e_2\vert=d-c$, there is $e_1(y)\subset (a,b)$, $\vert e_1(y)\vert=b-a$, such that for $x\in e_1(y)$, and hence for almost every $(x,y)\in (a,b)\times (c,d)$  we have $\p_1\p_2 f(x,y)=\p_2\p_1 f(x,y)$ which implies that $\p_1\p_2 f \in L^1([a,b]\times [c,d])$, that is,  (ii) is true. The
proof that (ii) implies (i) is analogous.

It remains only to prove the differentiability of $f$ on $E$, the
remaining part of the last statement being an immediate consequence
of Theorem \ref{the}.

Let us prove the differentiability of $f$ at $(x_0,y_0)\in E$. The
partial derivative $\p_2 f(x, y_0)$ exists for every $x$ and is
absolutely continuous in $x$. Analogously, $\p_1 f(x_0, y)$ exists
for every $y$ and is absolutely continuous in $y$. We have
\begin{align*}
f(x_0+\Delta x,y_0+\Delta y)-f(x_0,y_0)&=[f(x_0+\Delta x,y_0+\Delta y)-f(x_0+\Delta x,y_0)]\\
& \quad +[f(x_0+\Delta x,y_0)-f(x_0,y_0) ],\\
& = \p_2 f(x_0+\Delta x,y_0) \Delta y+o_2(\Delta y)\\&\quad +\p_1 f(x_0,y_0) \Delta x+o_1(\Delta x)\\
& = [\p_2 f(x_0,y_0) +\p_1\p_2 f(x_0,y_0) \Delta x+o_3(\Delta x)]\Delta y\\
& \quad +o_2(\Delta y)+\p_1 f(x_0,y_0) \Delta x+o_1(\Delta x)\\
&=\p_2 f(x_0,y_0) \Delta y+\p_1 f(x_0,y_0) \Delta x +R(\Delta
x,\Delta y) ,
\end{align*}
where $R(\Delta x,\Delta y)/(\Delta x^2+\Delta y^2)^{1/2} \to 0$ for
the denominator going to zero.
\end{proof}

\begin{remark}
It is well known  that in the theory of distributions the equality
of mixed derivatives holds at any order of differentiation
\cite{vladimirov02}. In order to convert this fact into a claim for
ordinary differentiation it is necessary that the second derivatives
$\p_2 \p_1 f$ and $\p_1 \p_2 f$  be {\em regular} distributions,
namely representable as the integral of the test function $\varphi$
with $L^1(\bar{O},\mathbb{R})$ functions. Tolstov's Lemma allows us
to remove this double condition on the second mixed derivatives, for
it is sufficient to place that condition on just $\p_2\p_1 f$.
\end{remark}

\section{Lipschitz conditions on the partial derivatives}

Let us recall that a function $g\colon U \to \mathbb{R}^k$ defined
on an open set $U\subset \mathbb{R}^n$ is Lipschitz if for every
$p,q\in U$,
\[
\Vert g(p)- g(q)\Vert< K \Vert p-q\Vert
\]
for some $K>0$. It is locally Lipschitz if this inequality holds
over every compact subset of $U$, with $K$ dependent on the compact
subset.

A function $f\colon U\to \mathbb{R}$, $U\subset \mathbb{R}^2$,
$(x,y) \mapsto f(x,y)$, is differentiable with Lipschitz
differential, or $C^{1,1}$ for short, if $df\colon U\to
\mathbb{R}^2$ is Lipschitz. Clearly, if $f\in C^{1,1}$ with
Lipschitz constant $K$ then the partial derivative $\p_1 f(x,\cdot)$
regarded as a function of $y$ is $K$-Lipschitz. In particular, the
Lipschitz constant does not change if we change $x$. We say that
$\p_1 f(x,y)$  is Lipschitz in $y$ uniformly in $x$. Analogously,
$\p_2 f(x,y)$  is Lipschitz in $x$ uniformly in $y$, and two other
similar combinations hold.

For functions admitting Lipschitz partial derivatives the $L^1$
condition on the mixed derivative which we met in Theorem \ref{jui}
is satisfied.

\begin{theorem} \label{jun}
Let $f\colon [a,b]\times [c,d] \to \mathbb{R}$,  be such that
$f(x,\cdot):[c,d]\to \mathbb{R}$ and $f(\cdot,y): [a,b]\to
\mathbb{R}$ are absolutely continuous for every $x\in [a,b]$ and
$y\in [c,d]$, respectively. The following properties are equivalent:
\begin{itemize}
\item[(i)] There is  $e_1\subset (a,b)$, $\vert e_1\vert=b-a$, such that for $x\in e_1$, $\p_1
f(x,\cdot)$ exists for every $y$, and moreover it is Lipschitz over
$[c,d]$ uniformly for $x\in e_1$.
\item[(ii)]  There is  $e_2\subset (c,d)$, $\vert e_2\vert=d-c$, such that for $y\in e_2$, $\p_2 f(\cdot, y)$ exists for every
$x$,  and moreover it is  Lipschitz over $[a,b]$ uniformly for $y\in
e_2$.
\end{itemize}
Suppose they hold true then there is a subset $E\subset
e_1\times e_2$, $\vert E\vert=(b-a)(d-c)$, such that on $E$
the function $f$ is differentiable, $\p_2 \p_1 f(x,y)$, $\p_1 \p_2
f(x,y)$ exist and are bounded  and
\[\p_2 \p_1 f=\p_1 \p_2 f.\]
\end{theorem}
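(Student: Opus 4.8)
The plan is to derive Theorem~\ref{jun} from Theorem~\ref{jui} by showing that the Lipschitz hypotheses of Theorem~\ref{jun} imply the absolute-continuity-plus-$L^1$ hypotheses of Theorem~\ref{jui}, after which the conclusion (differentiability of $f$ on $E$ and equality of the mixed derivatives on $E$) is inherited directly, and only the extra boundedness of the mixed derivatives remains to be checked. So the first step is: assume (i) of Theorem~\ref{jun}. For $x\in e_1$ the function $\p_1 f(x,\cdot)$ is $K$-Lipschitz on $[c,d]$, hence absolutely continuous on $[c,d]$, so (i) of Theorem~\ref{jui} will follow once we know $\p_2\p_1 f\in L^1([a,b]\times[c,d])$. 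For each fixed $x\in e_1$, $\p_2\p_1 f(x,\cdot)$ exists a.e.\ in $y$ (Lebesgue differentiation of the Lipschitz function $\p_1 f(x,\cdot)$) and satisfies $|\p_2\p_1 f(x,y)|\le K$ wherever it exists; thus the function $\p_2\p_1 f$, defined a.e.\ on $[a,b]\times[c,d]$, is bounded by $K$ in absolute value. Since a bounded measurable function on a set of finite measure is integrable, $\p_2\p_1 f\in L^1$, giving (i) of Theorem~\ref{jui}.

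The measurability of $\p_2\p_1 f$ as a function of two variables needs a word: one writes it as a pointwise a.e.\ limit of difference quotients $\big(\p_1 f(x,y+h_n)-\p_1 f(x,y)\big)/h_n$, each of which is measurable in $(x,y)$ (since $\p_1 f(x,\cdot)$ exists for every $y$ when $x\in e_1$, and $\p_1 f$ is a measurable function of two variables, being itself an a.e.\ limit of difference quotients of the measurable function $f$), so the limit is measurable. With (i) of Theorem~\ref{jui} in hand, Theorem~\ref{jui} yields (ii) of Theorem~\ref{jui} and hence, by the same boundedness argument applied in the other order (for $y\in e_2$, $\p_2 f(\cdot,y)$ is $K$-Lipschitz so $\p_1\p_2 f(\cdot,y)$ exists a.e.\ and is bounded by $K$), property (ii) of Theorem~\ref{jun} as well; thus (i)$\Leftrightarrow$(ii). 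Moreover Theorem~\ref{jui} produces the set $E\subset e_1\times e_2$ with $|E|=(b-a)(d-c)$ on which $f$ is differentiable, both mixed derivatives exist, and $\p_2\p_1 f=\p_1\p_2 f$.

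It remains to note that on $E$ these mixed derivatives are bounded: at a point $(x,y)\in E\subset e_1\times e_2$ we have $|\p_2\p_1 f(x,y)|\le K$ because $\p_1 f(x,\cdot)$ is $K$-Lipschitz (so its derivative, where it exists, is bounded by $K$), and likewise $|\p_1\p_2 f(x,y)|\le K$ because $\p_2 f(\cdot,y)$ is $K$-Lipschitz; since the two agree on $E$ this is consistent and gives the common bound $K$. This finishes the proof, the converse implication (ii)$\Rightarrow$(i) being entirely symmetric.

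The main obstacle I anticipate is the measurability/integrability bookkeeping in the first paragraph: one must be careful that ``$\p_1 f(x,\cdot)$ exists for every $y$ when $x\in e_1$'' combined with Lipschitz continuity really does make $\p_2\p_1 f$ a jointly measurable function on $[a,b]\times[c,d]$, so that its a.e.\ bound by $K$ translates into membership in $L^1$ and Theorem~\ref{jui} becomes applicable. Everything after that — applying Theorem~\ref{jui} and reading off boundedness from the Lipschitz constants — is routine.
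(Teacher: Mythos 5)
Your overall strategy --- reduce to Theorem~\ref{jui} by observing that the uniform Lipschitz bound forces $\vert\p_2\p_1 f\vert\le K$ a.e., hence $\p_2\p_1 f\in L^\infty\subset L^1$ --- is exactly the paper's, and your first paragraph (including the measurability remark) is fine. The problem is the step from (ii) of Theorem~\ref{jui} to (ii) of Theorem~\ref{jun}, which is where the real content of this theorem lies, and your justification of it is circular: you write ``for $y\in e_2$, $\p_2 f(\cdot,y)$ is $K$-Lipschitz so $\p_1\p_2 f(\cdot,y)$ exists a.e.\ and is bounded by $K$,'' but the $K$-Lipschitz property of $\p_2 f(\cdot,y)$ is precisely the assertion of (ii) of Theorem~\ref{jun} that you are trying to prove. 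Theorem~\ref{jui}(ii) only hands you \emph{absolute continuity} of $\p_2 f(\cdot,y)$ together with $\p_1\p_2 f\in L^1$; neither of these gives a pointwise bound on $\p_1\p_2 f$, and the hypothesis (i) controls only $\p_2\p_1 f$, not $\p_1\p_2 f$. The same circularity then infects your final paragraph, where the bound $\vert\p_1\p_2 f(x,y)\vert\le K$ on $E$ is again justified by the unproven Lipschitz property of $\p_2 f(\cdot,y)$.

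The missing argument is the following (and it is how the paper proceeds). From the last part of Theorem~\ref{jui} you get a full-measure set $W$ on which $\p_1\p_2 f=\p_2\p_1 f$, and from hypothesis (i) you know $\vert\p_2\p_1 f\vert\le K$ a.e.; by Fubini there is a full-measure set $b_2\subset(c,d)$ of $y$'s for which the slice $\pi_2^{-1}(y)\cap W$ has full measure in $(a,b)$. For $y\in e_2\cap b_2$ the function $\p_2 f(\cdot,y)$ is absolutely continuous and its a.e.\ derivative $\p_1\p_2 f(\cdot,y)$ coincides a.e.\ with $\p_2\p_1 f(\cdot,y)$, hence is bounded by $K$ a.e.; an absolutely continuous function whose derivative is bounded by $K$ a.e.\ is $K$-Lipschitz, with $K$ independent of $y$. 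Renaming $e_2\cap b_2$ as $e_2$ gives (ii), and only then does your boundedness claim for $\p_1\p_2 f$ on $E$ become legitimate. All the ingredients for this are available in your write-up, but the slicing argument that transfers the bound from $\p_2\p_1 f$ to $\p_1\p_2 f$ must actually be carried out rather than assumed.
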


\begin{proof}
Suppose (i) is true and let $x\in e_1$ so that $\p_1 f(x,\cdot)$
exists and is $K$-Lipschitz over $[c,d]$. Then $\vert \p_2 \p_1 f
(x,\cdot)\vert\le K$ a.e.\ in $(c,d)$ and by the assumption of Lipschitz
uniformity this bound holds for  every $x\in e_1$, in particular $\vert \p_2 \p_1 f
\vert\le K$ holds almost everywhere on $O$. Thus $\p_2 \p_1
f\in L^{\infty}(\bar{O},\mathbb{R})\subset
L^{1}(\bar{O},\mathbb{R})$ and condition (i) of Theorem \ref{jui} is satisfied. In particular, the last statement of that theorem implies that  ``$\vert \p_2 \p_1 f
\vert\le K$, $f$ is differentiable and $\p_2 \p_1 f$, $\p_1 \p_2 f$ exist and coincide''  almost everywhere
on a subset $W\subset O$, $\vert W\vert=(b-a)(d-c)$.

 By Theorem \ref{jui} there is also  a
set $e_2$ such that if $y\in e_2$ then $\p_2 f(\cdot,y)$ exists for
every $x$, and moreover it is absolutely continuous thus
\[
\p_2 f(x,y)-\p_2 f(a,y)=\int_a^x \p_1 \p_2 f(u,y) \dd u.
\]
However, by Fubini's theorem there is $b_2\subset(c,d)$, $\vert b_2\vert=d-c$, such that for every $y\in b_2$, $\vert\pi^{-1}(y)\cap W\vert=b-a$. Thus for almost every $y$, namely for $y\in e_2\cap b_2$, we have that
$\p_2 f(\cdot,y)$ exists for
every $x$ and  it is absolutely continuous, and for almost every $x$ we have ``$\p_1\p_2 f=\p_2\p_1f$ and $\vert \p_2 \p_1 f
\vert\le K$'', thus for $y\in e_2\cap b_2$, $\p_2 f(\cdot, y)$ is $K$ Lipschitz over $(a,b)$ where $K$ does
not depend on $y$. Thus (ii) is proved once we rename $e_2\cap b_2$ as $e_2$.

%
%
%But since $E$ is a product set $E=\pi_1(E)\times \pi_2(E)$, with
%$\vert \pi_1(E) \vert=b-a$, and since on $E$ $\vert  \p_1 \p_2
%f\vert=\vert \p_2 \p_1 f\vert\le K$ the previous equation implies
%that $\p_2 f(\cdot, y)$ is $K$-Lipschitz over $[a,b]$ where $K$ does
%not depend on $y\in \pi_2(E)$. Thus (ii) is proved with
%$e_2=\pi_2(E)$.

The last statement follows from the previous paragraph or  from the last one in Theorem
\ref{jui}.
\end{proof}

For $C^1$ functions we have:

\begin{theorem} \label{thr}
Let $\Omega$ be an open subset of $\mathbb{R}^2$ and let  $f \in
C^1(\Omega,\mathbb{R})$ then the following conditions are
equivalent:
\begin{itemize}
\item[(i)] For every $x$, the partial derivative $\p_1 f(x,\cdot)$ is locally Lipschitz,
locally uniformly with respect to $x$.
 \item[(ii)] For every $y$, the partial derivative  $\p_2 f(\cdot,y)$ is locally Lipschitz,
locally uniformly with respect to $y$.
\end{itemize}
If they hold true, for instance $f \in
C^{1,1}_{loc}(\Omega,\mathbb{R})$, then on a set $E\subset \Omega$, $\vert \Omega \backslash E\vert=0$,
 $\p_2 \p_1 f$ and $\p_1
\p_2 f$ exist, $f$ is differentiable and  $\p_2 \p_1 f=\p_1 \p_2 f$. In particular,  $\p_2 \p_1 f$ and $\p_1
\p_2 f$ belong to
$L^\infty_{loc}(\Omega,\mathbb{R})$.
\end{theorem}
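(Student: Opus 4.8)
The plan is to reduce Theorem \ref{thr} to Theorem \ref{jun} by localization. First I would observe that the equivalence of (i) and (ii), and all of the conclusions, are purely local: each assertion concerns the behaviour of $f$ on small neighbourhoods of points of $\Omega$, and the exceptional set $E$ can be built by exhausting $\Omega$ with a countable family of open rectangles. So I would fix a point $p_0=(x_0,y_0)\in\Omega$ and choose a closed rectangle $R=[a,b]\times[c,d]\subset\Omega$ with $p_0$ in its interior, small enough that the local Lipschitz bounds in (i) (resp.\ (ii)) are realized with a single constant $K$ on $R$.

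Next I would check that, on such an $R$, hypothesis (i) of Theorem \ref{thr} implies hypothesis (i) of Theorem \ref{jun}. Since $f\in C^1(\Omega)$, for every fixed $x$ the slice $f(\cdot,y)$ is $C^1$ hence absolutely continuous on $[a,b]$, and $f(x,\cdot)$ is $C^1$ hence absolutely continuous on $[c,d]$; this gives the blanket absolute-continuity hypothesis of Theorem \ref{jun}. The partial derivative $\p_1 f(x,\cdot)$ exists for \emph{every} $x$ (not just a.e.), because $f\in C^1$; and by assumption (i) it is Lipschitz on $[c,d]$ with a constant uniform in $x\in[a,b]$ after shrinking $R$. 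Thus condition (i) of Theorem \ref{jun} holds on $R$ (with $e_1=(a,b)$). Theorem \ref{jun} then yields a full-measure subset $E_R\subset(a,b)\times(c,d)$ on which $f$ is differentiable, $\p_2\p_1 f$ and $\p_1\p_2 f$ exist and are bounded by $K$, and the two coincide; moreover the equivalence (i)$\iff$(ii) in Theorem \ref{jun}, transported back, gives the equivalence (i)$\iff$(ii) of Theorem \ref{thr} locally, hence globally. Taking a countable cover $\Omega=\bigcup_k R_k^{\circ}$ by such rectangles and setting $E=\bigcup_k (E_{R_k}\cap R_k^{\circ})$, we get $|\Omega\setminus E|=0$ with all the stated properties; and since on each $R_k$ we have $|\p_2\p_1 f|\le K_k$ a.e., the mixed derivatives lie in $L^\infty_{loc}(\Omega)$.

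The one point needing a little care — and the main (mild) obstacle — is the uniformity of the Lipschitz constant. Condition (i) of Theorem \ref{thr} says $\p_1 f(x,\cdot)$ is \emph{locally} Lipschitz, locally uniformly in $x$, which by definition means: every point has a neighbourhood on which a single constant works for all nearby $x$. One must phrase the choice of $R$ so that $R$ lies inside such a neighbourhood, so that ``$\p_1 f(x,\cdot)$ is $K$-Lipschitz on $[c,d]$ uniformly for $x\in[a,b]$'' — exactly the hypothesis of Theorem \ref{jun} — is legitimately obtained. This is a standard compactness/shrinking argument: cover $\overline{R}$ by finitely many of the neighbourhoods furnished by local uniformity, take $K$ to be the maximum of the finitely many constants, and shrink $R$ if necessary so that it sits inside one such neighbourhood. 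Once this is in place the rest is bookkeeping, and the remark that $f$ being $C^{1,1}_{loc}$ is a sufficient (but not necessary) way to guarantee (i) and (ii) follows since a Lipschitz $df$ has, on each compact rectangle, all the uniform Lipschitz properties of its components.
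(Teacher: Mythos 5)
Your overall strategy --- localize to a closed rectangle $R=[a,b]\times[c,d]\subset\Omega$ on which the ``locally uniformly'' hypothesis yields a single Lipschitz constant $K$, verify the hypotheses of Theorem \ref{jun} there, and then patch the full-measure sets over a countable cover --- is exactly the paper's strategy, and the compactness/shrinking point you flag is handled correctly. But there is a genuine gap at the step where you assert that ``the equivalence (i)$\iff$(ii) in Theorem \ref{jun}, transported back, gives the equivalence (i)$\iff$(ii) of Theorem \ref{thr}.'' These two conditions (ii) are not the same statement: Theorem \ref{jun}(ii) only asserts that $\p_2 f(\cdot,y)$ is $K$-Lipschitz for $y$ in a set $e_2$ of \emph{full measure}, and says nothing about the exceptional $y\notin e_2$, whereas Theorem \ref{thr}(ii) demands the Lipschitz property for \emph{every} $y$. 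So after applying Theorem \ref{jun} you have proved (i)$\Rightarrow$``(ii) for a.e.\ $y$'', not (i)$\Rightarrow$(ii), and the equivalence claimed in the theorem is not yet established.

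The missing ingredient is precisely where the hypothesis $f\in C^1(\Omega)$ earns its keep beyond supplying absolute continuity of the slices. Since $\p_2 f$ is continuous, it is uniformly continuous on the compact rectangle $R$; given any $y$ and $\epsilon>0$, pick $\bar y\in e_2$ with $|\bar y-y|$ small enough that $|\p_2 f(x,\bar y)-\p_2 f(x,y)|<\epsilon$ for all $x\in[a,b]$, and then
\[
|\p_2 f(x_2,y)-\p_2 f(x_1,y)|\le |\p_2 f(x_2,\bar y)-\p_2 f(x_1,\bar y)|+2\epsilon\le K|x_2-x_1|+2\epsilon .
\]
Letting $\epsilon\to0$ shows that $\p_2 f(\cdot,y)$ is $K$-Lipschitz for \emph{every} $y\in[c,d]$, with the same constant, which is what Theorem \ref{thr}(ii) requires. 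This density-plus-uniform-continuity upgrade is the main content of the paper's proof of Theorem \ref{thr}; the rest (the conclusions about $E$, differentiability, equality and local boundedness of the mixed derivatives) does follow from Theorem \ref{jun} by the bookkeeping you describe.
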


\begin{proof}
Let $p\in \Omega$ and let us consider an open neighborhood
$(a,b)\times (c,d)$ of $p$ such that $\bar{O}\subset \Omega$. Let us
assume (i). By Theorem \ref{jun} we need only to show that  $\p_2
f(\cdot,y)$ is $K$-Lipschitz in $(a,b)$ for every chosen value of
$y\in (c,d)$ provided it is so for almost every $y\in (c,d)$. Let
$y\in (c,d)$ and let $\epsilon>0$. The function $\p_2 f$ is
continuous, thus uniformly continuous over the compact set
$[a,b]\times [c,d]$. We can find a $\delta>0$ such that whenever
$\vert y_2-y_1\vert<\delta$, $y_1,y_2\in [c,d]$, we have $\vert \p_2
f(x,y_2)-\p_2 f(x,y_1)\vert<\epsilon$ for every $x\in [a,b]$. We can
find some $\bar{y}\in (y-\delta,y+\delta)\cap [c,d]$ such that $\p_2
f(\cdot, \bar{y})$ is $K$-Lipschitz. Thus
\begin{align*}
\vert \p_2 f(x_2,y)-\p_2 f(x_1,y)\vert &\le \vert \p_2 f(x_2,\bar
y)-\p_2 f(x_1,\bar y) \vert+\vert \p_2 f(x_2,\bar y)-\p_2 f(x_2,
y)\vert
\\& \ +\vert \p_2 f(x_1,\bar y)-\p_2 f(x_1, y)\vert\le K \Vert
x_2-x_1\Vert+2 \epsilon
\end{align*}
From the arbitrariness of $\epsilon$, $x_1$ and $x_2$ we obtain that
$\p_2 f(\cdot,y)$ is $K$-Lipschitz for every chosen value of $y$.
The remaining claims follow trivially from  Theorem \ref{jun}.
\end{proof}

%The expression ``locally uniformly with respect to $x$'' means that
%there is an open set of the chosen value of $x$ over which the
%Lipschitz constant does not vary, see the  statement of Theorem
%\ref{ocf}

We stress that if $f\in C^1$ then the fact that $\p_1 f(x,y)$ is
Lipschitz in $y$ uniformly in $x$, and that $\p_2 f(x,y)$ is
Lipschitz in $x$ uniformly in $y$, does not guarantee that $f\in
C^{1,1}$; it is sufficient to consider the function $f(x,y)=\vert
x\vert^{3/2}$. As a consequence, the assumptions of this theorem are
 weaker than $f\in C^{1,1}_{loc}(\Omega,\mathbb{R})$.

%
% for one would have to demand that  $\p_1 f(x,y)$ is
%Lipschitz in $x$ uniformly in $y$ and that $\p_2 f(x,y)$  is
%Lipschitz in $y$ uniformly in $x$ .
%
%There is of course a generalization of the above theorem to $C^k$,
%$k \ge 1$, vector valued functions dependent on  two or more
%variables and such that the $k$-th derivatives satisfy a suitable
%Lipschitz condition. This versions can be easily obtained from the
%above result and will not be considered here.

For the $C^{1,1}_{loc}(\Omega,\mathbb{R})$ case the equality of
mixed derivatives can also be obtained as a consequence of Young's
(point 3 above) and Rademacher's theorems. We recall that the latter
states that every Lipschitz function is almost everywhere
differentiable \cite{evans92}. Indeed:

\begin{theorem} \label{nde}
Let $f\colon \Omega \to \mathbb{R}$, $f \in C^{1,1}_{loc}$, then $f$
is twice differentiable almost everywhere and in such
differentiability set $\p_2 \p_1 f=\p_1\p_2 f$.
\end{theorem}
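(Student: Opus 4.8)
The plan is to obtain the statement by a short combination of Rademacher's theorem and Young's theorem (point~3 of the Introduction), since the substantive analysis is already contained in those two results.

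First I would unpack the hypothesis. As $f\in C^{1,1}_{loc}(\Omega,\mathbb{R})$, the differential $df=(\p_1 f,\p_2 f)\colon \Omega\to\mathbb{R}^2$ is locally Lipschitz (and in particular $f\in C^1(\Omega,\mathbb{R})$, so $\p_1 f$ and $\p_2 f$ exist at every point of $\Omega$). Covering $\Omega$ by countably many open balls $B_k$ with $\bar B_k\subset\Omega$, Rademacher's theorem applied on each $B_k$ produces a null set $N_k\subset B_k$ off which $df$ is differentiable; setting $E=\Omega\setminus\bigcup_k N_k$ we obtain a set of full measure at every point of which $df$ is differentiable.

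Next I would fix $p=(x_0,y_0)\in E$ and verify the hypotheses of point~3 there. Differentiability of the $\mathbb{R}^2$-valued map $df$ at $p$ is equivalent to differentiability at $p$ of each of its components, so $\p_1 f$ and $\p_2 f$ are differentiable at $p$; and, as noted, they exist throughout $\Omega$, hence on a neighbourhood of $p$. Localizing to a small open rectangle containing $p$, Young's theorem then gives $\p_2\p_1 f(x_0,y_0)=\p_1\p_2 f(x_0,y_0)$. Moreover $f$, being $C^1$, is differentiable on a neighbourhood of $p$, and $df$ is differentiable at $p$, so $f$ is \emph{twice differentiable} at $p$; since $p\in E$ was arbitrary, $f$ is twice differentiable on the full-measure set $E$. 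In fact the same reasoning shows that the equality $\p_2\p_1 f=\p_1\p_2 f$ holds at \emph{every} point where $f$ is twice differentiable, because twice differentiability of $f$ at $p$ forces $df$, hence both $\p_1 f$ and $\p_2 f$, to be differentiable at $p$, so Young's theorem again applies, and $\p_2\p_1 f,\p_1\p_2 f\in L^\infty_{loc}$ follows from the local Lipschitz bound on $df$.

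The only obstacle here is bookkeeping rather than substance: Rademacher's theorem is a local statement, so one must pass to $\Omega$ via the countable covering above, and one must match the notion of ``$f$ twice differentiable at $p$'' — namely $df$ defined near $p$ and differentiable at $p$ — with what Rademacher supplies. No genuine analytic difficulty arises beyond invoking the two cited theorems.
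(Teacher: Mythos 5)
Your proposal is correct and is essentially identical to the paper's own proof: both apply Rademacher's theorem to the locally Lipschitz map $df$ to get almost-everywhere differentiability of $\p_1 f$ and $\p_2 f$, then invoke Young's theorem (point~3) at each such point. The extra bookkeeping you supply (countable covering, matching the notion of twice differentiability) only fills in details the paper leaves implicit.
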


\begin{proof}
The differential $d f\colon \Omega\to \mathbb{R}^2$ is Lipschitz
thus differentiable almost everywhere (Rademacher's theorem). If $p$
belongs to the differentiability set then $\p_1 f$ and $\p_2 f$,
being components of the differential, are there differentiable. Thus
by  Young's theorem 3.\ we have $\p_2 \p_1 f=\p_1\p_2 f$ at $p$.
\end{proof}

\section{Some applications}

In this section we explore some applications that motivated our
study. They are in the area of differential geometry  but it is
likely that many other applications can be found.

\subsection{Usefulness of Lipschitz one-forms}

A rather natural application of these results is in the study of
Lipschitz 1-forms, namely 1-forms with Lipschitz coefficients, over
differentiable manifolds (at least $C^{1,1}$). Indeed, some related
results have been already developed following paths independent of
the above considerations.

 If $f\in
C^{1,1}_{loc}$ then by Theorem \ref{nde} $\dd^2 f=0$ almost
everywhere in the Lebesgue 2-dimensional measure of any
2-dimensional $C^{1,1}$ embedded manifold. Thus the exterior
differential satisfies $\dd^2=0$ in a well defined sense whenever
 0-forms and 1-forms
are taken with the correct degree of differentiability. In
particular, if $\omega$ is a Lipschitz 1-form then Stokes theorem
\[
\int_S \dd \omega=\int_{\p S} \omega
\]
still holds true \cite{simic96}.

One also expects that Lipschitz distributions of hyperplanes should
be integrable according to the usual rule for $C^1$ distributions.
Namely, let $\omega$ be a Lipschitz 1-form, then the distribution
$\textrm{Ker}\,\omega$ should be integrable if and only  if
$\omega\wedge \dd \omega=0$. This result has indeed been proved
\cite{simic96,rampazzo07}.

The nice behavior of locally Lipschitz 1-forms suggests to study
\mbox{(pseudo-)} Riemannian $C^{2,1}$ manifolds endowed with  Lipschitz
connections $\nabla$ and $C^{1,1}$ metrics. Indeed, in Cartan's
approach the connection is regarded as a Lie algebra-valued 1-form
in the bundle of reference frames. In such framework the Riemannian
tensor would be a locally bounded element of $L_{loc}^\infty(M)$ and
hence would be defined only almost everywhere in the Lebesgue
2-dimensional measure. In particular, it could be discontinuous
though locally bounded. This feature would be quite appreciated in
the theory of Einstein's general relativity. There the Ricci tensor
is proportional to the stress-energy tensor and so it is necessarily
discontinuous for the typical mass distribution of a planet; the
reader may consider the discontinuity in density which takes place
at the planet's boundary.

\subsection{An application to mathematical relativity}

As mentioned, the assumptions  of Theorem \ref{thr} are weaker than
the condition $f\in C^{1,1}_{loc}$. We wish to describe shortly  an
example of application where those weaker conditions turn out to be
important.

 In Einstein's General Relativity the spacetime continuum
is represented with a Lorentzian manifold \cite{hawking73}, namely a
differentiable manifold endowed with a metric of signature
$(-,+,+,+)$. Observers or  massive particles are represented by
$C^1$ curves $x(s)$   which are timelike, namely such that $
g({x}',{x}')<0$. Unfortunately, for various mathematical arguments
it is necessary to consider limits of such curves, and those limits
are rarely $C^1$ but are necessarily Lipschitz. Lipschitz
mathematical objects arise quite naturally in General Relativity,
ultimately because the light cones on spacetime (at $p\in M$ the
light cone is given by the subset of $T_pM$ where $g$ vanishes)
place a  bound on the local speed of massive objects.

%A typical problem which is met in the discussion of the clock effect
%(or twin paradox)  deals with curve variations $x(t,s)$ of timelike
%geodesics $x(\cdot, s)$ parametrized by $s$, where the transverse
%curves $x(t, \cdot)$ are just Lipschitz.
%
%
%Indeed, let $N$ be a normal neighborhood of an event $p\in M$, and
%let $y:[a,b] \to N$ be a Lipschitz curve starting from $p$ which has
%future directed timelike tangent almost everywhere.
%
%\begin{theorem}
%The
%\end{theorem}
%
%
%\begin{theorem} \label{kid}
%Let $(M,g)$ be a Lorentzian time oriented manifold. Let $O$ be an
%open convex normal neighborhood of $p\in M$, and let $\exp_p U\to
%O$, $U\subset T_pM$, be the local diffeomorphism established by the
%exponential map.
%
%If $y:[a,b] \to O$ is a  Lipschitz causal curve starting from $p$
%then $v(s):=\exp_p^{-1} y(s)$ is a lightlike vector as long as
%$y([a,s])$ is a lightlike geodesic segment issued from $p$; for all
%the other values of $s$, $v(s)$ is timelike. Finally, denoting with
%$w$ the unique geodesic connecting $p$ to $y(b)$ we have
%\[
%L[y]\le L[w]= \sqrt{-g(v(b),v(b))} ,
%\]
%equality holding only the image of $y$ coincides with that of $w$.
%\end{theorem}
%
%
%
%
%
%In this situation the properties of the exponential map allow one to
%prove that the tangent $\p_t x(t,s)$ is Lipschitz in $s$ uniformly
%in $t$, exactly the assumptions of Theorem \ref{thr} (the function
%need not be differentiable in $s$). It turns out that in order to
%prove formulas such as the first variation formula for the energy
%functional  of differential geometry,
%\[
%
%\newpage

A typical problem which is met in the discussion of the clock effect
(or twin paradox)  deals with curve variations $x(t,s)$ of timelike
geodesics $x(\cdot, s)$ parametrized by $s$, where the transverse
curves $x(t, \cdot)$ are just Lipschitz. In this situation the
properties of the exponential map allow one to prove that the
tangent $\p_t x(t,s)$ is Lipschitz in $s$ uniformly in $t$, exactly
the assumptions of Theorem \ref{thr} (the function need not be
differentiable in $s$). It turns out that in order to prove formulas
such as the first variation formula for the energy functional  of
differential geometry,
\[
E[x]=\frac{1}{2} \int_0^1 g(\dot x, \dot x) \dd t,
\]
one needs to switch $\p_s \p_t x$ for $\p_t \p_s x$ an operation
which is indeed allowed thanks to Theorem \ref{thr}. Thus, this
theorem can be used to operate with Lipschitz curves in Lorentzian
(or Riemannian) geometry much in the same way as it is usually done
with $C^1$ curves \cite{minguzzi13d}.

%A full account of these developments will be given elsewhere .

\section{Conclusions}

%Some applications of the concept of strong differentiation show that
%this definition allows us

We have reviewed the notion of strong differentiation and Mikusi\'nski's result on the equality of mixed partial derivatives.
The assumptions do  not demand the existence and
continuity of any second derivative in a neighborhood of the point.
Rather, the theorem assumes the weaker notion of strong differentiability of
one first derivative at the point. This possibility was suggested by
previous applications of the concept of strong differentiation where
it proved to be particularly advantageous, e.g.\ the inverse
function theorem.

We have then considered results which prove the existence and
equality of mixed partial derivatives almost everywhere. We have
presented and elaborated previous results by Tolstov  stressing the
importance of the Lipschitz condition on first partial derivatives
for applications. The advantage of this approach over alternative distributional approaches becomes clear
whenever one cannot conclude that both
mixed second partial derivatives are summable. We have ended this
work giving a specific example of application where this classical
approach is more effective and justified.

\medskip
\noindent {\bf Acknowledgment}. I thank an anonymous referee for some useful criticisms. Work partially supported by GNFM of INDAM.


\begin{thebibliography}{99}


\bibitem{higgins40}
T.~J. Higgins.
\newblock A note on the history of mixed partial derivatives.
\newblock \emph{Scripta Mathematica}, \textbf{7} (1940) 59--62.

\bibitem{schwarz73}
H.~A. Schwarz.
\newblock Communication.
\newblock \emph{Archives des Sciences Physiques et Naturelles}, \textbf{48}
  (1873) 38--44.

\bibitem{dini07}
U.~Dini.
\newblock \emph{Lezioni di analisi infinitesimale, Vol. I} (Stab. Tipogradico
  Succ. Fratelli Nistri, Pisa, 1907).

\bibitem{peano90}
G.~Peano.
\newblock Sur l'interversion des d\'erivations partielles.
\newblock \emph{Mathesis}, \textbf{10} (1890) 153--154.

\bibitem{rudin76}
W.~Rudin.
\newblock \emph{Principles of mathematical analysis} (Mc{G}raw-{H}ill, New
  York, 1976).

\bibitem{kimura60}
N.~Kimura, J.~Burr.
\newblock Discontinuous function with partial derivatives everywhere.
\newblock \emph{Amer. Math. Monthly}, \textbf{67} (1960) 813--814.

\bibitem{apostol74}
T.~M. Apostol.
\newblock \emph{Mathematical Analysis} ({Addison-Wesley} {P}ublishing
  {C}ompany, Reading, 1974).

\bibitem{peano92}
G.~Peano.
\newblock Sur la d\'efinition de la d\'eriv\'ee.
\newblock \emph{Mathesis}, \textbf{2} (1892) 12--14.

\bibitem{dolecki12}
S.~Dolecki, G.~H. Greco.
\newblock Amazing oblivion of {P}eano's legacy (2012).

\bibitem{minguzzi13d}
E.~Minguzzi.
\newblock Convex neighborhoods for {L}ipschitz connections and sprays (2013).
\newblock ArXiv:1308.6675.

\bibitem{esser64}
M.~Esser, O.~Shisha.
\newblock A modified differentiation.
\newblock \emph{Amer. Math. Monthly}, \textbf{71} (1964) 904--906.

\bibitem{nijenhuis74}
A.~Nijenhuis.
\newblock Strong derivatives and inverse mappings.
\newblock \emph{Amer. Math. Monthly}, \textbf{81} (1974) 969--980.

\bibitem{leach61}
E.~B. Leach.
\newblock A note on inverse function theorems.
\newblock \emph{Proc. Amer. Math. Soc.}, \textbf{12} (1961) 694--697.

\bibitem{mikusinski72}
J.~Mikusi{\'n}ski.
\newblock On partial derivatives.
\newblock \emph{Bull. {A}cad. {P}olon. {S}ci. {S}\'er. {S}ci. {M}ath. {A}str.
  {P}hys.}, \textbf{20} (1972) 941--944.

\bibitem{mikusinski73}
J.~Mikusi{\'n}ski.
\newblock O pochidnych ezqstkowych i pelnych (on partial and full derivatives,
  in polish).
\newblock \emph{Roczniki Polskiego Towarzystwa Matematycznego. Seria II:
  Wiadomosci Matematyczne}, \textbf{16} (1973) 23--35.

\bibitem{mikusinski78}
J.~Mikusi\'nski.
\newblock \emph{The Bochner integral} (Academic Press, New York, 1978).

\bibitem{skornik83}
K.~Sk{\'o}rnik.
\newblock On full derivatives.
\newblock \emph{Ann. Polon. Math.}, \textbf{43} (1983) 323--325.

\bibitem{rockafellar99}
R.~T. Rockafellar.
\newblock Second-order convex analysis.
\newblock \emph{Journal of Nonlinear and Convex Analysis}, \textbf{1} (1999)
  1--16.

\bibitem{tolstov49}
G.~P. Tolstov.
\newblock On partial derivatives ({R}ussian).
\newblock \emph{Izvestiya Akad. Nauk SSSR. Ser. Mat.}, \textbf{13} (1949)
  425--426.
\newblock Translated in: 'On partial derivatives', American Mathematical
  Society Translation 1952 (1952), no. 69, 30 pages. Reprinted: Tolstov, 'On
  partial derivatives', in 'Translations, Series 1, Volume 10: Functional
  Analysis and Measure Theory', American Mathematical Society, 1962.

\bibitem{tolstov49b}
G.~P. Tolstov.
\newblock On the mixed second derivative ({R}ussian, no published {E}nglish
  translation).
\newblock \emph{Mat. Sbornik (N.S.)}, \textbf{24(66)} (1949) 27--51.

\bibitem{currier33}
A.~E. Currier.
\newblock Proof of the fundamental theorems on second-order cross partial
  derivatives.
\newblock \emph{Trans. Amer. Math. Soc.}, \textbf{35} (1933) 245--253.

\bibitem{aksoy02}
A.~Aksoy, M.~Martelli.
\newblock Mixed partial derivatives and {F}ubini's theorem.
\newblock \emph{The College Mathematics Journal}, \textbf{33} (2002) 126--130.

\bibitem{rudin70}
W.~Rudin.
\newblock \emph{Real and complex analysis} (Mc{G}raw-{H}ill, London, 1970).

\bibitem{vladimirov02}
V.~S. Vladimirov.
\newblock \emph{Methods of the theory of generalized functions} (Taylor and
  Francis, London, 2002).

\bibitem{evans92}
L.~C. Evans, R.~F. Gariepy.
\newblock \emph{Measure theory and fine properties of functions} ({CRC}
  {P}ress, Boca {R}aton, 1992).

\bibitem{simic96}
S.~Simi\'c.
\newblock Lipschitz distribution and {A}nosov flows.
\newblock \emph{Proc. Amer. Math. Soc.}, \textbf{124} (1996) 1869--1877.

\bibitem{rampazzo07}
F.~Rampazzo.
\newblock Frobenius-type theorems for {L}ipschitz distributions.
\newblock \emph{J. Differential Equations}, \textbf{243} (2007) 270--300.

\bibitem{hawking73}
S.~W. Hawking, G.~F.~R. Ellis.
\newblock \emph{The Large Scale Structure of Space-Time} (Cambridge
  {U}niversity {P}ress, Cambridge, 1973).

 \end{thebibliography}
\end{document}